\documentclass[12pt, oneside,reqno]{amsart}
\usepackage{Everything}

\title{Characterization of $\LL^1_\kappa$}
\date{March 2023}
\author{Siiri Kivim\"{a}ki}
\address[S.  Kivim\"{a}ki]{
Department of Mathematics and Statistics, University of Helsinki, Finland}
\email{siiri.kivimaki@helsinki.fi}

\author[B. Veli\v{c}kovi\'c]{Boban Veli\v{c}kovi\'c}
\address[B. Veli\v{c}kovi\'c]{
Institut de Math\'ematiques de Jussieu - Paris Rive Gauche (IMJ-PRG)\\
Universit\'e Paris Cit\'e\\
B\^atiment Sophie Germain\\
8 Place Aur\'elie Nemours \\ 75013 Paris, France}
\email{boban@math.univ-paris-diderot.fr}
\urladdr{http://www.logique.jussieu.fr/~boban/}

\begin{document}

\maketitle

\begin{abstract}
    The logic $\LL^1_\kappa$ was introduced by Shelah in \cite{shelah2012nice}. In \cite{shelah2021isomorphic}, he proved that for a strongly compact cardinal $\kappa$, it admits the following algebraic characterization: two structures are $\LL^1_\kappa$-equivalent if and only if they have isomorphic iterated ultrapowers via $\kappa$-complete ultrafilters. We give  presentation of the logic $\LL^1_\kappa$ and a simplified and slightly modified proof of this result.
\end{abstract}

\section{The logic $\LL^1_{\kappa}$}

The logic $\LL^1_{\kappa}$ is defined through a variation of an Ehrenfeucht-Fraïssé game. The \textit{states} of this game will be triples $(\alpha,f,\pi)$, where $\alpha$ is an ordinal, $\pi$ is a partial isomorphism, and $f$ is a partition function which partitions some subset of the field of $\pi$ into countably many pieces.

\begin{de*}[The game $\G^\beta_\theta$] Let $\AA$ and $\BB$ be structures of same signature, let $\beta$ be an ordinal and let $\theta$ be a cardinal. The game \[
\G^\beta_\theta(\AA,\BB)
\]
is played as follows.

\textbf{Starting state:} The starting state is $(\beta,\emptyset,\emptyset)$.

\textbf{Further states:} Assume that the game is at state $(\alpha, f,\pi)$. 
    \begin{itemize}
        \item The player $\1$ chooses some ordinal $\alpha'<\alpha$ and some set $X\in\AA^{\leq\theta}\cup\BB^{\leq\theta}$.
        \item The player $\2$ chooses a partial partition function $f':\AA\cup\BB\to\omega$ such that ${\dom(f),X\subseteq \dom(f')}$
        and such that for all $a\in \dom(f)$, \[
        f'(a):=f(a)\dot-1.
        \]
        Then she chooses a partial isomorphism $\pi'\supseteq\pi$ such that \[{f'^{-1}\{0\}\subseteq \fld(\pi')}.\] 
        The next state is $(\alpha',f',\pi')$.
    \end{itemize}

The player to first break the rules loses.
\end{de*}

Let $\equiv^\beta_\theta$ be the transitive closure of the relation 
\[
\text{The player }\2\text{ has a winning strategy in the game }\G^\beta_\theta(\AA,\BB).
\]

A \textit{logic} is a class function associating to each signature $\tau$ a collection of sentences and a satisfaction relation, satisfying certain regularity properties, see \cite{Barwise1985-BARML-8}.

\begin{de*}[The logic $\LL^1_\kappa$] Let $\tau$ be a signature.
\begin{enumerate}
    \item A \textit{$\tau$-sentence} in $\LL^1_\kappa$ is a class of $\tau_0$-structures which is closed under the relation $\equiv^\beta_\theta$, for some $\tau_0\in[\tau]^{<\kappa}$ and some $\beta,\theta<\kappa$.
    \item The satisfaction relation of $\LL^1_\kappa$ is defined as \[
\MM\models\phi\quad:\iff\quad\MM\rest\tau_0\in\phi,
\]
where $\tau_0$ is the signature such that $\phi$ consists of $\tau_0$-structures.
\end{enumerate}

\end{de*}

\begin{fact*} For cardinals of the form $\kappa=\beth_\kappa$, the logic $\LL^1_\kappa$ is a regular logic strictly between the logics $\LL_{\kappa\omega}$ and $\LL_{\kappa\kappa}$.
\end{fact*}
\begin{proof}
    See \cite{shelah2012nice}.
\end{proof}

Notice that for any $\tau$-structures $\AA$ and $\BB$, \[
\AA\equiv_{\LL^1_\kappa}\BB\quad\iff\quad\forall \tau_0\in[\tau]^{<\kappa}\quad\forall\beta,\theta<\kappa\quad \AA\rest\tau_0\equiv^\beta_\theta\BB\rest\tau_0.
\]

\begin{prop}[The Union Lemma for $\LL^1_\kappa$]\label{union} Assume that $\kappa=\beth_\kappa$. Assume that $\bar{\AA}=\{\AA_n\}_{n\in \omega}$ is an $\LL_{\kappa\kappa}$-elementary chain of structures. Then, for each $n$,
\[
\AA_n\equiv_{\LL^1_{\kappa}}\bigcup\bar{\AA}.
\]
\end{prop}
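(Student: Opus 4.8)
The plan is to reduce the statement to a single game and then hand $\2$ a winning strategy. Fix $\tau_0\in[\tau]^{<\kappa}$ and $\beta,\theta<\kappa$; by the characterization of $\equiv_{\LL^1_\kappa}$ recorded just above, it is enough to see that $\AA_n\rest\tau_0\equiv^\beta_\theta(\bigcup\bar{\AA})\rest\tau_0$. Since $\{\AA_k\rest\tau_0\}_k$ is again an $\LL_{\kappa\kappa}$-elementary chain (every $\tau_0$-formula being a $\tau$-formula), with union $(\bigcup\bar{\AA})\rest\tau_0$, and since the tail $\{\AA_k\}_{k\ge n}$ is again such a chain with union $\bigcup\bar{\AA}$, I may assume $\tau_0=\tau$ and $n=0$; write $\AA_\omega:=\bigcup\bar{\AA}$. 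As $\equiv^\beta_\theta$ is the transitive closure of ``$\2$ has a winning strategy in $\G^\beta_\theta$'', it suffices to give $\2$ a winning strategy in $\G^\beta_\theta(\AA_0,\AA_\omega)$. Two preliminaries help: every run of the game is finite, since the ordinals played strictly decrease; and the case $\beta<\omega$ is trivial, because then every run has length $\le\beta$, so if $\2$ always gives each new element of $\dom(f)$ a counter larger than the number of rounds still to come --- which never exceeds the current ordinal --- then $f^{-1}\{0\}$ stays empty and she can answer with $\emptyset$ forever. So I assume $\beta\ge\omega$, and recall that $\AA_0\preceq_{\LL_{\kappa\kappa}}\AA_k$ for every $k$ and that $\kappa=\beth_\kappa$ makes $\kappa$ a strong limit cardinal.

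I would then have $\2$ maintain, after round $i$ at state $(\alpha_i,f_i,\pi_i)$, a level $m_i<\omega$ such that $f_i^{-1}\{0\}\subseteq\fld(\pi_i)$ and $\pi_i$ is a partial isomorphism with $\dom(\pi_i)\subseteq\AA_0$, $\ran(\pi_i)\subseteq\AA_{m_i}$, which is $\LL_{\kappa\kappa}$-elementary between $\AA_0$ and $\AA_{m_i}$ up to quantifier rank $\rho_i$; here $i\mapsto\rho_i$ is a fixed order-preserving map with values below $\kappa$, strictly decreasing in $i$, leaving room for two back-and-forth extensions per round (say $\rho_i=\alpha_i\cdot 2+2$), so that $\rho_i$ always dominates the depth of the remaining game (which is $\le\alpha_i$ when $\alpha_i<\omega$ and is trivially below $\alpha_i$ otherwise). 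When $\1$ plays $(\alpha_{i+1},X)$, player $\2$ decrements $f_i$ and extends it over $X$ by assigning a new element of $X$ the counter $0$ if it lies in $\AA_0$, and the counter $\ell+1$ if it lies in $\AA_\omega$, where $\ell$ is least with the element in $\AA_\ell$; this gives $f_{i+1}$. She sets $D:=f_{i+1}^{-1}\{0\}$, picks $m_{i+1}\ge m_i$ with $D\cap\AA_\omega\subseteq\AA_{m_{i+1}}$ (possible --- see below --- because $D$ meets only finitely many levels of the chain), and extends $\pi_i$ to a partial isomorphism $\pi_{i+1}\supseteq\pi_i$ with $D\subseteq\fld(\pi_{i+1})$, $\dom(\pi_{i+1})\subseteq\AA_0$, $\ran(\pi_{i+1})\subseteq\AA_{m_{i+1}}$, still $\LL_{\kappa\kappa}$-elementary between $\AA_0$ and $\AA_{m_{i+1}}$ up to rank $\rho_{i+1}$. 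The purpose of the counters is that an $\AA_\omega$-element coming due at round $j$ was introduced at some earlier round $j'<j$ with counter $j-j'$, so it occurs first at level $j-j'-1<j$ (and the $\AA_0$-elements of $D$ sit in $\AA_0$); with the finiteness of runs this forces $m_i<\omega$ for every $i$. Thus $\2$ genuinely operates only between $\AA_0$ and finite levels of the chain, even though $\1$ may have thrown in tuples spread over infinitely many levels.

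The remaining task, and where I expect all the work to be, is carrying out the extension of $\pi_i$ to $\pi_{i+1}$: do it in two half-steps, first sending $D\cap\AA_0$ forward into $\AA_{m_{i+1}}$ and then sending $D\cap\AA_\omega$ (which lies in $\AA_{m_{i+1}}$) back into $\AA_0$. In each half-step one needs the quantifier-rank-$<\rho_{i+1}$ type over the current range (resp.\ domain) of the batch being added to be equivalent to a single $\LL_{\kappa\kappa}$-formula $\psi$ --- and this is exactly the place where $\kappa=\beth_\kappa$ is used: as $\kappa$ is a strong limit and the batch together with $\dom(\pi_i)\cup\ran(\pi_i)$ has size $<\kappa$, there are only $<\kappa$ pairwise inequivalent $\LL_{\kappa\kappa}$-formulas of that rank in those parameters, so the type is a $<\kappa$-ary conjunction. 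Its existential closure $\exists\bar x\,\psi$ has quantifier rank $<\rho_i$ (this is what the slack in the choice of $\rho_i$ is for) and holds in the larger of the two structures, witnessed by the batch itself; by the $\rho_i$-elementarity of $\pi_i$ --- and since $\LL_{\kappa\kappa}$-formulas are absolute from $\AA_{m_i}$ to $\AA_{m_{i+1}}$ and from $\AA_0$ to $\AA_{m_{i+1}}$ along the elementary chain --- it then holds in the smaller one, supplying the needed images, resp.\ preimages, and keeping the map $\rho_{i+1}$-elementary. This reinstates the invariant, and since $\2$ thereby never breaks a rule while every run is finite, $\2$ wins $\G^\beta_\theta(\AA_0,\AA_\omega)$, as required. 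The delicate point is precisely the bookkeeping of three simultaneously shrinking quantities --- the game ordinal $\alpha_i$, the elementarity rank $\rho_i$, and the counters postponing high-level elements --- checking that each has, at every round, just enough room; and in particular the counting step ``a quantifier-rank-$<\rho$ type of a $<\kappa$-tuple over a $<\kappa$-set is a single $\LL_{\kappa\kappa}$-formula'', on which the whole argument rests, is where the hypothesis $\kappa=\beth_\kappa$ does its job.
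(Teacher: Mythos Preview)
The paper does not actually prove this proposition: its entire proof is the line ``See \cite{shelah2012nice}.'' So there is no in-paper argument to compare yours against; you have supplied what the authors chose to outsource.

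Your argument is essentially correct and is, in outline, the intended one. The two ideas that make the Union Lemma go through are exactly the ones you isolate: first, the delay mechanism built into the game --- by assigning a freshly played $\AA_\omega$-element the counter $\ell+1$ where $\ell$ is the least level at which it appears, $\2$ guarantees that any element whose counter has reached $0$ by round $i$ already lives in some $\AA_m$ with $m\le i$, so at every stage she only has to run a back-and-forth between $\AA_0$ and a fixed finite level $\AA_{m_i}$ of the chain, never with the union itself; second, a rank-decreasing $\LL_{\kappa\kappa}$ back-and-forth between $\AA_0$ and $\AA_{m_i}$, with two half-steps per round, which is possible because the quantifier-rank-$<\rho$ type of a $<\kappa$-tuple over a $<\kappa$-set is (up to equivalence) a single $\LL_{\kappa\kappa}$-formula when $\kappa=\beth_\kappa$. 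Your bookkeeping $\rho_i=\alpha_i\cdot 2+2$ gives exactly the slack $\rho_i\ge\rho_{i+1}+2$ needed for the two half-steps, and the transfer of the resulting existential sentence uses $\AA_0\preceq_{\LL_{\kappa\kappa}}\AA_{m_i}\preceq_{\LL_{\kappa\kappa}}\AA_{m_{i+1}}$ in the expected way.

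Two small remarks. The case split at $\beta<\omega$ is harmless but unnecessary: your main argument already covers it. And when you write ``if it lies in $\AA_0$ \dots\ if it lies in $\AA_\omega$'' for the counter assignment, note that since $\AA_0\subseteq\AA_\omega$ the two domains overlap; what you really mean (and what the game needs) is to handle elements of $D\cap\AA_0$ on the domain side of $\pi$ and elements of $D\setminus\AA_0$ on the range side, which your level function $\ell$ does automatically. Neither point affects correctness.
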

\begin{proof}
    See \cite{shelah2012nice}.
\end{proof}

\section{Ultrapowers and $\LL^1_\kappa$-theories}

If $\UU$ is an ultrafilter on a set $I$ and $\AA$ is a structure, the ultrapower $\AA^{I}/\UU$ will be denoted by $\AA^\UU$. If $\UUU=(\UU_n)_n$ is a sequence of ultrafilters on some sets and $\AA$ is a structure, the iterated ultrapower of $\AA$ along the ultrafilters $(\UU_n)_n$ will be denoted by $\AA^{\UUU}$. In other words, $\AA^{\UUU}$ is the direct limit of the system 
\[
(\AA_n,j_{m,n})_{m<n<\omega},
\]
where \begin{align*}
    &\AA_0:=\AA\\
    &\AA_{n+1}:=\AA_n^{\UU_n},
\end{align*}
and the maps $j_{m,n}:\AA_m\to\AA_n$ are compositions of the ultrapower embeddings. In case the ultrafilters are $\kappa$-complete, we have:

\begin{thm*}[\L os] If $\UU$ is a $\kappa$-complete ultrafilter on a set $I$ and $\AA$ is a structure, then the ultrapower embedding \[
\AA\to\AA^\UU,\quad a\mapsto[(a)_{i\in I}]_\UU
\]
is $\LL_{\kappa\kappa}$-elementary.
    
\end{thm*}

By the \L os Theorem, thus, if the ultrafilters $\UU_n$ are $\kappa$-complete, then the maps $j_{m,n}:\AA_m\to\AA_n$ are $\LL_{\kappa\kappa}$-elementary. 

The direct limit $\AA^{\UUU}$ comes together with embeddings \[
j_{n,\omega}:\AA_n\to\AA^{\UUU},
\]
which are first-order elementary but not more in general. In particular, the limit embeddings might fail to be $\LL_{\kappa\kappa}$-elementary, even if the ultrafilters were $\kappa$-complete. In this case, they might even fail to be $\LL^1_\kappa$-elementary, but by the Proposition \ref{union}, they still preserve the $\LL^1_\kappa$-theory.

For instance, any ultrapower of a well-founded model by a $\kappa$-complete ultrafilter is again well-founded, since well-foundedness is expressible in the logic $\LL_{\omega_1\omega_1}$, and thus preserved under $\LL_{\kappa\kappa}$-elementary embeddings (in case $\kappa$ is uncountable). However, it is easy to produce an ill-founded model from a well-founded one by iterating the ultrapower construction $\omega$ many times, as will be done in the proof of characterization of $\LL^1_\kappa$.

\subsection*{Strongly compact cardinals} For cardinals $\lambda\geq\kappa$, denote \[
\Pow_\kappa(\lambda):=\{x\subseteq\lambda:|x|<\kappa\}.
\]
An ultrafilter $\UU$ on $\Pow_\kappa(\lambda)$ is \textit{fine} if it is $\kappa$-complete and for each $x\in\Pow_\kappa(\lambda)$, it contains the cone
\[
C_x:=\{y\in\Pow_\kappa(\lambda):x\subseteq y\}.
\]

A cardinal $\kappa$ is \textit{$\lambda$-compact} if there exists a fine ultrafilter on $\Pow_\kappa(\lambda)$. A cardinal $\kappa$ is \textit{strongly compact} if it is $\lambda$-compact for every $\lambda\geq\kappa$. The $\lambda$-compact cardinals have the following covering property:

\begin{lem}\label{lemma} Assume that $\kappa$ is a $\lambda$-compact cardinal and $\UU$ is a fine ultrafilter on $\Pow_\kappa(\lambda)$. Assume that $(H,\in)$ is a transitive model of $\ZFC^-$ closed under $<\kappa$-sequences such that $\kappa,\lambda\in H$. For any set $Y\subseteq H^\UU$ of size at most $\lambda$, there is a set $X\in H^\UU$ such that 
\[
Y\subseteq X\quad\text{and}\quad H^\UU\models|X|< j(\kappa),
\]
where $j:H\to H^\UU$ is the ultrapower embedding.
\end{lem}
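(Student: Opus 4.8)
The plan is to realize the desired $X$ as an element $[F]_{\UU}\in H^{\UU}$ for a suitably chosen $F\colon\Pow_\kappa(\lambda)\to H$, and to read off its two properties from the \L os Theorem. The guiding case is $F(x)=x$: then $X=[\operatorname{id}]_{\UU}$ contains $j(\alpha)=[\,x\mapsto\alpha\,]_{\UU}$ for every $\alpha<\lambda$, because $\{x:\alpha\in x\}=C_{\{\alpha\}}\in\UU$ by fineness, so this $X$ already covers $Y=j[\lambda]$, and $H^{\UU}\models|X|<j(\kappa)$ since every $x\in\Pow_\kappa(\lambda)$ has $|x|<\kappa$. For an arbitrary $Y$ I would run the same construction, feeding in the given enumeration of $Y$.

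In detail, I would first record that $\Pow_\kappa(\lambda)\subseteq H$ and that $H\models|x|<\kappa$ for each $x\in\Pow_\kappa(\lambda)$: the increasing enumeration $e_x\colon\operatorname{ot}(x)\to x$ is a sequence of length $\operatorname{ot}(x)<\kappa$ of ordinals below $\lambda\in H$, so $e_x\in H$ by closure under ${<}\kappa$-sequences, whence $x=\operatorname{ran}(e_x)\in H$, and since $e_x$ is a bijection onto $x$ and $\kappa$ is still a cardinal in $H$, also $H\models|x|<\kappa$. Next, using choice in $V$, write $Y=\{[h_\xi]_{\UU}:\xi<\lambda\}$ with each $h_\xi\colon\Pow_\kappa(\lambda)\to H$ (repetitions allowed), and put
\[
F(x)\ :=\ \{\,h_\xi(x):\xi\in x\,\},\qquad X\ :=\ [F]_{\UU}.
\]
Then $F(x)$ is the range of the sequence $\langle h_{e_x(i)}(x):i<\operatorname{ot}(x)\rangle$, a ${<}\kappa$-sequence of elements of $H$; hence $F(x)\in H$, and this same surjection witnesses $|F(x)|<\kappa$ in $H$, so $F$ really maps into $H$ and $X\in H^{\UU}$. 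If $\xi\in x$ then $h_\xi(x)\in F(x)$, so $\{x:h_\xi(x)\in F(x)\}\supseteq C_{\{\xi\}}\in\UU$, giving $[h_\xi]_{\UU}\in X$ and thus $Y\subseteq X$. Finally $\{x:H\models|F(x)|<\kappa\}=\Pow_\kappa(\lambda)\in\UU$, so the \L os Theorem applied to the first-order formula $|v_0|<v_1$ (with $j(\kappa)$ represented by the constant function $x\mapsto\kappa$) yields $H^{\UU}\models|X|<j(\kappa)$.

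The one delicate point, and the main obstacle, is the claim that $F$ takes values in $H$. The $\lambda$-sequence $\langle h_\xi:\xi<\lambda\rangle$ of chosen representatives is external and of length $\lambda$, so there is no reason for it to lie in $H$; what saves the argument is that each individual value $F(x)$ is only a ${<}\kappa$-sized \emph{set} of elements of $H$, and such sets are absorbed into $H$ precisely by the hypothesis that $H$ is closed under ${<}\kappa$-sequences. The assembled $F$ may itself be external, but that is harmless, since $H^{\UU}$ is the full ultrapower $H^{\Pow_\kappa(\lambda)}/\UU$ and so has $[F]_{\UU}$ as a member. The remaining verifications are routine manipulations of the definition of the ultrapower and the \L os Theorem.
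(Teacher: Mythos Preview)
Your proof is correct and follows essentially the same route as the paper: enumerate $Y=\{[h_\xi]_{\UU}:\xi<\lambda\}$, set $F(x)=\{h_\xi(x):\xi\in x\}$ and $X=[F]_{\UU}$, then use fineness for $Y\subseteq X$ and \L o\'s for $H^{\UU}\models|X|<j(\kappa)$. You supply more detail than the paper does---the motivating special case $F(x)=x$, the verification that $\Pow_\kappa(\lambda)\subseteq H$, and the explicit justification that $F(x)\in H$ via closure under ${<}\kappa$-sequences---but the construction and the two key computations are identical.
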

\begin{proof}
    Let $Y\subseteq H^\UU$ be a set of size at most $\lambda$. We find a set $X\in H^\UU$ which covers $Y$ and for which \[
    H^\UU\models|X|<j(\kappa).
    \]
    Say $Y=\{[f_i]_\UU:i<\lambda\}$. Define the function $F:\Pow_\kappa(\lambda)\to H$, \[
    F(x)=\{f_i(x):i\in x\}.
    \]
    As $H$ is closed under $<\kappa$-sequences, this function $F$ has indeed its range inside $H$, thus $[F]_\UU\in H^\UU$.  Let $X:=[F]_\UU$. 
    
    By fineness we have $Y\subseteq X$: for each $i<\lambda$, 
    \[
    {C_{\{i\}}\subseteq\{x:f_i(x)\in F(x)\}\in\UU}.
    \] 
    Also ${H^\UU\models|X|< e(\kappa)}$: simply because 
    \[
    \{x:|F(x)|<\kappa\}=\Pow_\kappa(\lambda)\in\UU.
    \]
    
\end{proof}

\section{Proof of the characterization}

We now give a proof of the following theorem.

\begin{thm*}[Shelah, Theorem 1.5 in \cite{shelah2021isomorphic}] Assume that $\kappa$ is a strongly compact cardinal. The following are equivalent:
\begin{enumerate}
    \item $\AA\equiv_{\LL^1_\kappa}\BB$.
    \item There is a sequence $\bar{\UU}=(\UU_n)_{n<\omega}$ of $\kappa$-complete ultrafilters such that \[
    \AA^{\bar{\UU}}\cong\BB^{\bar{\UU}}.
    \]
\end{enumerate}
\end{thm*}

\begin{proof}\spa
\begin{enumerate}
    
    \item[\textit{(1)$\To$(2)}:] Assume that $\AA\equiv_{\LL^1_\kappa}\BB$. For simplicity, assume that the signature $\tau$ of the models $\AA$ and $\BB$ is relational and of size $<\kappa$, and the domains of $\AA$ and $\BB$ are disjoint. For simplicity again, assume that for all $\beta,\theta<\kappa$, the player $\2$ has a winning strategy in the game \[
    \G^\beta_\theta(\AA,\BB).
    \]

    We will build a countable sequence of ultrafilters $\UUU$ such that the iterated ultrapowers $\AA^{\UUU}$ and $\BB^{\UUU}$ are isomorphic.

    Let $\mu$ be a regular cardinal large enough such that the models $\AA$ and $\BB$, $\kappa$, and all the winning strategies are in $H(\mu)$. For all $\beta,\theta<\kappa$, fix some winning strategy $\sigma_{\beta,\theta}$ for the player $\2$ in the game $\G^\beta_\theta(\AA,\BB)$. Choose new unary predicate symbols $A$ and $B$ and a new binary function symbol $\sigma$. Define the structure
    \[
    \HH:=(H(\mu),\in,A^\HH,B^\HH,\sigma^\HH,R^\HH)_{R\in\tau}
    \]
    where
    \begin{itemize}
        \item $A^\HH=\dom(\AA)$
        \item $B^\HH=\dom(\BB)$
        \item $\sigma^\HH(\beta,\theta)=\begin{cases}
            \sigma_{\beta,\theta},\quad&\text{if }\beta,\theta\in\kappa\\
            \emptyset &\text{otherwise.}
        \end{cases}$
        \item For each symbol $R\in\tau$, $R^\HH=R^{\AA}\cup R^\BB$.
    \end{itemize}
    
We will now build structures $(\HH_n)_n$, $(\AA_n)_n$, $(\BB_n)_n$, ultrafilters $(\UU_n)_n$ and sets $(X_n)_n$, by recursion on $\omega$.

    \begin{enumerate}
        \item[\textbf{Step $0$:}] Let $\HH_0:=\HH$, $\AA_0:=\AA$ and $\BB_0:=\BB$.
        
        \item[\textbf{Step $n+1$:}] 
        Assume that $\HH_m$, $\AA_m$ and $\BB_m$ have been defined for all $m\leq n$. For each $m\leq n$, denote \[
        \lambda_m:=|\AA_m|+|\BB_m|+\kappa.
        \]
        Furthermore, assume that for all $m<n$, we have defined (using the fact that $\kappa$ is strongly compact) \begin{itemize}
            \item A fine ultrafilter $\UU_m$ on the set $\Pow_\kappa(\lambda_m)$.
            \item Its corresponding ultrapower embedding \[
            e_m:\HH_m\to\HH_m^{\UU_m}=:\HH_{m+1}.
            \]
            \item A set $X_m\in\HH_{m+1}$ such that the pointwise images $e_m[\AA_m]$ and $e_m[\BB_m]$ are subsets of $X_m$ and \[
            \HH_{m+1}\models|X_m|<e_m(\kappa),
            \] using the covering property of compact cardinals as in Lemma \ref{lemma}.
        \end{itemize}
We now define the ultrafilter $\UU_n$, the model $\HH_{n+1}$, an embedding $e_n$, the set $ X_n $, and the models $ \AA_{n+1}$ and $ \BB_{n+1}$.
\begin{itemize}
\item Let $\UU_{n}$ be any fine ultrafilter on $\Pow_\kappa(\lambda_n)$. This is possible because $\kappa$ is strongly compact.
\item Let $\HH_{n+1}:=\HH_n^{\UU_n}.$
\item Let $e_n:\HH_n\to\HH_{n+1}$
be the ultrapower embedding. Notice that this embedding is $\LL_{\kappa\kappa}$-elementary and its critical point is $\kappa$.
\item Let $X_n\in\HH_{n+1}$ be a set such that 
\[
e_n[\AA_n],e_n[\BB_n]\subseteq X_n\quad\text{and}\quad \HH_{n+1}\models |X_n|<e_n(\kappa).
\]
This is possible by the covering properties of $\lambda_n$-compact cardinals, by Lemma \ref{lemma}.
\item Finally, let
\begin{align*}
    & \AA_{n+1}:=\AA_n^{\UU_n}\\
    &\BB_{n+1}:=\BB_n^{\UU_n}.
\end{align*}

\end{itemize}

\end{enumerate}

We have the directed system
\[
\left(\HH_n,e_{m,n}\right)_{m<n<\omega},
\]
where each $e_{m,n}:\HH_m\to\HH_n$ is an $\LL_{\kappa\kappa}$-elementary embedding, obtained by composing the ultrapower embeddings. Let $\HH^{\UUU}$ be the direct limit of this system.

The restricted maps
\begin{align*}
    &e^\AA_{m,n}:=e_{m,n}\rest\AA_m:\AA_m\to\AA_n\\
    &e_{m,n}^\BB:=e_{m,n}\rest\BB_m:\BB_m\to\BB_n,
\end{align*}
are also $\LL_{\kappa\kappa}$-elementary.
We get the directed systems 
\[
\left(\AA_n,e^\AA_{m,n}\right)_{m<n<\omega}\quad\text{and}\quad\left(\BB_n,e^\BB_{m,n}\right)_{m<n<\omega},
\]
and we can take the direct limits of these systems, denote them by $\AA^{\UUU}$ and $\BB^{\UUU}$, respectively.

We have the first-order elementary limit embeddings:
    \begin{align*}
    &e_{n,\omega}:\HH_n\to\HH^{\UUU}\\
    &e^\AA_{n,\omega}:\AA_n\to\AA^{\UUU}\\
    &e^\BB_{n,\omega}:\BB_n\to\BB^{\UUU}.
\end{align*}
\end{enumerate}

\begin{claim*}
    The models $\AA^{\UUU}$ and $\BB^{\UUU}$ are isomorphic.
\end{claim*}
\begin{proof}[Proof of Claim]

Notice first that for each $n$, the $n$th iterates $\AA_n$ and $\BB_n$ are isomorphic to the structures $A^{\HH_n}$ and $B^{\HH_n}$, 
respectively. Thus also \[
\AA^{\UUU}\cong A^{\HH^{\UUU}}\quad\text{and}\quad \BB^{\UUU}\cong B^{\HH^{\UUU}}.
\]
It is thus enough to show that $A^{\HH^{\UUU}}$ and $B^{\HH^{\UUU}}$ are isomorphic.

By the first-order elementarity of the map $e_{0,\omega}$,
\begin{align*}
    \HH^{\UUU}\models\quad \q\forall\beta,\theta<e_{0,\omega}(\kappa)\quad&\sigma^{\HH^{\UUU}}(\beta,\theta)\textit{ is a winning strategy for the player }\2\textit{ in }\\
        &\textit{the game }\G^\beta_\theta(A^{\HH^{\UUU}},B^{\HH^{\UUU}})\q.
\end{align*}

We now fix some parameters $\beta$ and $\theta$ below $e_{0,\omega}(\kappa)$ in order to consider the game $\G^\beta_\theta(A^{\HH^{\UUU}},B^{\HH^{\UUU}})$, computed in $\HH^{\UUU}$.

Let $\beta:=e_{1,\omega}(\kappa)$ and for each $n$, denote \[
\beta_n:=e_{n+2,\omega}(\kappa).
\]
The sequence $(\beta_n)_n$ is a descending sequence of ordinals of $\HH^{\UUU}$ below $\beta$. 

For each $n$, denote
\[
\bar{X}_n:=e_{n+1,\omega}(X_n).
\]
By construction, the sets $(\bar{X}_n)_n$ cover the domains of the models $\AA^{\UUU}$ and $\BB^{\UUU}$.
Let
\[
\theta:=\max\{|\bar{X}_n|^{\HH^{\UUU}},\beta\}.
\] 
Both $\beta$ and $\theta$ are below $e_{0,\omega}(\kappa)$, and each $\bar{X}_n$ has size $\leq\theta$ in $\HH^{\UUU}$.

Then we describe a play of the player $\1$ in the game $\G^\beta_\theta(\AA^{\UUU},\BB^{\UUU})$:
\begin{itemize} 
    \item At the $(2n+1)$th step, he plays the ordinal $\beta_{2n+1}$ and the set $\AA^{\UUU}\cap \bar{X}_{2n+1}$.
    \item At the $(2n+2)$th step, he plays the ordinal $\beta_{2n+2}$ and the set $\BB^{\UUU}\cap \bar{X}_{2n+2}$.
\end{itemize}
Every finite initial segment of this play is as an element in the model $\HH^{\UUU}$.
Hence, the player $\2$ must be able to win against this play; otherwise, there would be some finite play of the player $\1$ which the player $\2$ loses and this would contradict the fact that in the model $\HH^{\UUU}$, the player $\2$ has a winning strategy in the game $\G^\beta_\theta(A^{\HH^{\UUU}},B^{\HH^{\UUU}})$.

She can thus win, and eventually, after $\omega$ many steps, she will have produced a chain of partial isomorphisms $(\pi_n)_n$ such that 
\[
\bigcup_n\pi_n:A^{\HH^{\UUU}}\cong B^{\HH^{\UUU}}.
\]

This ends the proof of the Claim.

\end{proof}
\item[\textit{(2)$\To$(1)}:] Assume that $\bar{\UU}=(\UU_n)_n$ are $\kappa$-complete ultrafilters, each $\UU_n$ on some set $I_n$, and 
    $\AA^{\UUU}\cong\BB^{\UUU}$.
    We show that $\AA\equiv_{\LL^1_\kappa}\BB$.
    
    Denote 
    \[
\begin{cases} \AA_0:=\AA\\
\AA_{n+1}:=\AA_n^{\UU_n}
\end{cases}
\]
and
\[
\begin{cases} \BB_0:=\BB\\
\BB_{n+1}:=\BB_n^{\UU_n}.
\end{cases}
\]
    Without loss of generality we may identify each $\AA_n$ with its image under the embedding into the direct limit and get that for each $n$,
    \[
    \AA_n\elem_{\LL_{\kappa\kappa}}\AA_{n+1}\quad\text{and}\quad\AA^{\UUU}\cong\bigcup_n\AA_n.
    \]
    and similarly for the models $\BB_n$.
    The chains $(\AA_n)_n$ and $(\BB_n)_n$ are thus $\LL_{\kappa\kappa}$-elementary, and by the Union Lemma \ref{union},
    \[
    \AA\equiv_{\LL_\kappa^1}\AA^{\UUU}\cong\BB^{\UUU}\equiv_{\LL^1_\kappa}\BB.
    \]
    
    This shows that, indeed, $\AA\equiv_{\LL_\kappa^1}\BB$, as wanted.
\end{proof}

\nocite{Barwise1985-BARML-8}
\nocite{kanamori2008higher}
\bibliographystyle{plain}
\bibliography{bib}

\end{document}